\theoremstyle{plain}
\newtheorem{teo}{Theorem}[section]
\newtheorem{lemma}[teo]{Lemma}
\newtheorem{ackn}{Acknowledgments\!}
\theoremstyle{definition}
\theoremstyle{remark}
\newtheorem{rem}[teo]{Remark}
\numberwithin{equation}{section}
\def\SS{{{\mathbb S}}}
\def\RR{{\mathbb R}}
\def\RRR{{\mathrm R}}
\def\WWW{{\mathrm W}}
\def\HHH{{\mathrm H}}
\def\Ric{{\mathrm {Ric}}}
\def\CCC{{\mathrm C}}
\def\div{\operatornamewithlimits{div}\nolimits}
\title[Locally Conformally Flat Quasi--Einstein Manifolds]{Locally Conformally Flat Quasi--Einstein Manifolds}
\date{\today}
\author[Giovanni Catino]{Giovanni Catino}
\address[Giovanni Catino]{SISSA -- International School for Advanced Studies, Via Bonomea 265 , Trieste, Italy, 34136}
\email[G. Catino]{catino@sissa.it}
\author[Carlo Mantegazza]{Carlo Mantegazza}
\address[Carlo Mantegazza]{Scuola Normale Superiore di Pisa, P.za Cavalieri 7, Pisa, Italy, 56126}
\email[C. Mantegazza]{c.mantegazza@sns.it}
\author[Lorenzo Mazzieri]{Lorenzo Mazzieri}
\address[Lorenzo Mazzieri]{SISSA -- International School for Advanced Studies, Via Bonomea 265, Trieste, Italy, 34136}
\email[L. Mazzieri]{mazzieri@sissa.it}
\author[Michele Rimoldi]{Michele Rimoldi}
\address[Michele Rimoldi]{Dipartimento di Matematica, 
Universit\`a degli Studi di Milano, 
Via Saldini 50, Milano, Italy, 20133}
\email[M. Rimoldi]{michele.rimoldi@unimi.it}
\date{\today}
\begin{document}

\begin{abstract} In this paper we prove that any complete locally
  conformally flat quasi-Einstein manifold of dimension $n\geq 3$ is
  locally a warped product with $(n-1)$--dimensional fibers 
  of constant curvature. This result includes also the case of locally conformally flat gradient Ricci solitons.
\end{abstract}

\maketitle

\section{Introduction}

Let $(M^n,g)$, for $n\geq 3$, be a complete quasi--Einstein Riemannian manifold,
that is, there exist a smooth function $f:M^n\to\mathbb{R}$ and  two
constants $\mu,\lambda\in\RR$ such that
\begin{equation}\label{qem}
\Ric+\nabla^{2} f - \mu\, df \otimes df = \lambda g \,.
\end{equation}

When $\mu=0$, quasi--Einstein manifolds correspond to gradient Ricci
solitons and when $f$ is constant~\eqref{qem} gives the Einstein
equation and we call the quasi--Einstein metric trivial. We also
notice that, for $\mu=\tfrac{1}{2-n}$, the metric
$\widetilde{g}=e^{-\frac{2}{n-2}f}g$ is Einstein. Indeed, from the
expression of the Ricci tensor of a conformal metric, we get
\begin{eqnarray*}
\Ric_{\widetilde{g}}&=&\Ric_{g} +\nabla^{2} f +\tfrac{1}{n-2} df \otimes
df +\tfrac{1}{n-2} \big(\Delta f - |\nabla f|^{2} \big) g\\
&=& \, \tfrac{1}{n-2}\big(\Delta f - |\nabla f|^{2}+(n-2)\lambda\big)
e^{\frac{2}{n-2}f} \,\widetilde{g}\,.
\end{eqnarray*}
In particular, if $g$ is also locally conformally flat, then
$\widetilde{g}$ has constant curvature.

Quasi--Einstein manifolds have been recently introduced by J.~Case,
Y.-S.~Shu and G.~Wei in~\cite{caseshuwei}. In that work the authors focus mainly on the case $\mu\geq 0$. The case $\mu=\frac{1}{m}$ for some $m\in\mathbb{N}$ is particularly relevant due to the link with Einstein warped products. Indeed in~\cite{caseshuwei}, following the results in~\cite{kimkim}, it is proved a characterization of these quasi--Einstein
metrics as base metrics of Einstein warped product metrics 
(see also~\cite[Theorem 2]{rimoldi}). This characterization on the one hand enables
to translate results from one setting to the other and on the other
hand permits to furnish several examples of quasi--Einstein manifolds 
(see~\cite[Chapter 9]{Besse},~\cite{lupagepope}). Observe also that, in case $\frac{1}{2-n}\leq \mu<0$, the definition of quasi--Einstein metric was used by D. Chen in \cite{ChenD} in the context of finding conformally Einstein product metrics on $M^n\times F^m$ for $\mu=\frac{1}{2-m-n}$, $m\in\mathbb{N}\cup\left\{0\right\}$. 

As a generalization of Einstein manifolds, quasi--Einstein manifolds
exhibit a certain rigidity. This is well known for $\mu=0$, 
but we have evidence of this also in the case $\mu\geq 0$. This is expressed for example by 
triviality results and curvature estimates;
see~\cite{case,caseshuwei,rimoldi}. For instance it is known that,
according to Qian version of Myers' Theorem, 
if $\lambda>0$ and $\mu>0$ in~\eqref{qem} then $M^n$ is compact 
(see~\cite{qian}). Moreover in~\cite{kimkim}, analogously to the case $\mu=0$, it is proven that if $\lambda\leq0$, compact
quasi--Einstein manifolds are trivial. A generalization to the
complete non-compact setting of this result is obtained
in~\cite{rimoldi} by means of an $L^p$--Liouville result for the
weighted Laplacian which relies upon estimates for the infimum of the
scalar curvature (extending the previous work 
in~\cite{caseshuwei}). These latter are achieved by means of tools
coming from stochastic analysis such as the weak maximum principle at
infinity combined with Qian's estimates on weighted volumes 
(see \cite{qian} and also~\cite[Section 2]{maririgolisetti}).

\bigskip

The Riemann curvature
operator of a Riemannian manifold $(M^n,g)$ is defined 
as in~\cite{gahula} by
$$
\mathrm{Riem}(X,Y)Z=\nabla_{Y}\nabla_{X}Z-\nabla_{X}\nabla_{Y}Z+\nabla_{[X,Y]}Z\,.
$$ 
In a local coordinate system the components of the $(3,1)$--Riemann 
curvature tensor are given by
$\RRR^{d}_{abc}\tfrac{\partial}{\partial
  x^{d}}=\mathrm{Riem}\big(\tfrac{\partial}{\partial
  x^{a}},\tfrac{\partial}{\partial
  x^{b}}\big)\tfrac{\partial}{\partial x^{c}}$ and we denote by
$\RRR_{abcd}=g_{de}\RRR^{e}_{abc}$ its $(4,0)$--version.

\medskip

{\em In all the paper the Einstein convention of summing over the repeated 
indices will be adopted.}

\medskip

With this choice, for the sphere $\SS^n$ we have
${\mathrm{Riem}}(v,w,v,w)=\RRR_{abcd}v^aw^bv^cw^d>0$. The Ricci tensor is obtained by the contraction 
$\RRR_{ac}=g^{bd}\RRR_{abcd}$ and $\RRR=g^{ac}\RRR_{ac}$ will 
denote the scalar curvature. The so called Weyl tensor is then 
defined by the following decomposition formula (see~\cite[Chapter~3,
Section~K]{gahula}) in dimension $n\geq 3$,
\begin{equation}\label{Weyl}
\WWW_{abcd}=\,\RRR_{abcd}+\frac{\RRR}{(n-1)(n-2)}(g_{ac}g_{bd}-g_{ad}g_{bc})
- \frac{1}{n-2}(\RRR_{ac}g_{bd}-\RRR_{ad}g_{bc}
+\RRR_{bd}g_{ac}-\RRR_{bc}g_{ad}).
\end{equation}
The Weyl tensor satisfies all the symmetries of the curvature tensor
and all its traces with the metric are zero, 
as it can be easily seen by the above formula.\\
In dimension three $\WWW$ is identically zero for every Riemannian
manifold, it becomes relevant instead when $n\geq 4$ since its
vanishing is a condition equivalent for $(M^n,g)$ to be {\em locally
  conformally flat}, that is, around every point $p\in M^n$ there is a conformal deformation $\widetilde{g}_{ab}=e^fg_{ab}$ of the original metric $g$, such that the new metric is flat, 
namely, the Riemann tensor associated to $\widetilde{g}$ is zero in $U_p$ (here $f:U_p\to\RR$ is a smooth function defined in a open
neighborhood $U_p$ of $p$). In dimension $n=3$, on the other hand, locally conformally flatness is equivalent to the vanishing of the Cotton tensor
$$
\CCC_{abc} = \nabla_{c} \RRR_{ab} - \nabla_{b} \RRR_{ac} -
\tfrac{1}{2(n-1)} \big( \nabla_{c} \RRR \, g_{ab} - \nabla_{b} \RRR \,
g_{ac} \big)\,.
$$ 
When $n\geq 4$ note that one can compute, (see \cite{Besse}), that
\begin{equation*}
\,\nabla^d\WWW_{abcd}=-\frac{n-3}{n-2}\CCC_{abc}.
\end{equation*}
Hence if we assume that the manifold is locally conformally flat, the Cotton tensor is identically zero also in this case.

\medskip

In this paper we will consider a generic $\mu\in\RR$ and we will prove the following

\begin{teo}\label{main}
Let $(M^{n},g)$, $n\geq 3$, be a complete locally conformally flat quasi--Einstein manifold. Then
\begin{enumerate}
\item[(i)] if $\mu=\tfrac{1}{2-n}$, then $(M^{n},g)$ is globally conformally equivalent to a spaceform.  
\item[(ii)] if $\mu\neq\tfrac{1}{2-n}$, then, around any regular point
  of $f$, the manifold $(M^{n},g)$ is locally a warped product with
  $(n-1)$-dimensional fibers of constant sectional curvature.
\end{enumerate} 
\end{teo}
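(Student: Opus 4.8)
The plan is to exploit the vanishing of the Weyl tensor (equivalently, for $n=3$, the Cotton tensor) to show that the Ricci tensor has at most two distinct eigenvalues, one of them with the gradient $\nabla f$ as eigenvector, and then to invoke a warped product splitting. I would start from the quasi--Einstein equation~\eqref{qem} and compute the covariant derivative of the Ricci tensor. Taking a further covariant derivative of~\eqref{qem} and using the commutation of third derivatives of $f$ (i.e.\ the Ricci identity $\nabla_a\nabla_b\nabla_c f - \nabla_b\nabla_a\nabla_c f = \RRR_{abcd}\nabla^d f$), one produces an expression for the antisymmetrized derivative of the Ricci tensor entirely in terms of $f$, $\nabla f$, $\nabla^2 f$, and the curvature. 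Substituting $\nabla^2 f = \lambda g - \Ric + \mu\, df\otimes df$ back in, this should give a clean formula for the Cotton tensor $\CCC_{abc}$ in terms of the Weyl tensor contracted with $\nabla f$ plus lower order terms; schematically one expects something of the form $\CCC_{abc} = (\mu\text{-dependent constant})\,\WWW_{abcd}\nabla^d f + (\text{terms involving } \Ric, \nabla f)$.

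The key step is then the local conformal flatness hypothesis, which forces $\WWW\equiv0$ and $\CCC\equiv0$ simultaneously in all dimensions $n\geq3$. Feeding these vanishings into the formula from the previous paragraph should yield a pointwise algebraic identity relating $\Ric$ and $\nabla f\otimes\nabla f$. Concretely, I would expect to derive that at any point where $\nabla f\neq0$, the Ricci tensor restricted to the orthogonal complement of $\nabla f$ is a multiple of the induced metric, and that $\nabla f$ is itself an eigenvector of $\Ric$. In other words, $\Ric$ takes the form $\alpha\, g + \beta\, \frac{df\otimes df}{|\nabla f|^2}$ for suitable functions $\alpha,\beta$. I would also need to record the behaviour of the scalar curvature along $\nabla f$; contracting~\eqref{qem} gives $\RRR + \Delta f - \mu|\nabla f|^2 = n\lambda$, which together with the contracted second Bianchi identity controls how $\RRR$ and the eigenvalues vary.

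With the Ricci tensor in this two--eigenvalue form and $\nabla f$ a distinguished eigendirection, I would then argue that the level sets of $f$ near a regular point are totally umbilical hypersurfaces and, using the algebraic constraints on the second fundamental form and the vanishing of Weyl restricted to them, that they have constant sectional curvature. This is precisely the structure that characterizes a warped product $dr^2 + \varphi(r)^2 g_N$ with fibers $N$ of constant curvature; I would cite or reproduce the standard warped product splitting criterion (for instance via the Hessian of the distance-type function built from $f$, or the characterization that $\nabla f$ is a closed conformal/concircular-type vector field on level sets). This handles case~(ii), where $\mu\neq\tfrac{1}{2-n}$ and the warped product conclusion is the desired one. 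For case~(i), $\mu=\tfrac{1}{2-n}$, I would simply invoke the conformal change computed in the introduction: the metric $\widetilde g = e^{-\frac{2}{n-2}f}g$ is Einstein and, being locally conformally flat, has constant curvature, so $g$ is globally conformally a spaceform.

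The main obstacle I anticipate is twofold: first, the bookkeeping in deriving the exact coefficient of $\WWW_{abcd}\nabla^d f$ in the Cotton tensor formula, where the $\mu df\otimes df$ term must be tracked carefully since it is exactly the place where the special value $\mu=\tfrac{1}{2-n}$ decouples case~(i) from~(ii); and second, passing from the pointwise algebraic structure of $\Ric$ to the actual local warped product decomposition, which requires showing the two eigenvalue functions depend only on $f$ (equivalently, are constant on level sets) and promoting umbilicity to the constant-curvature statement on the fibers. The set where $\nabla f = 0$ must also be handled, but restricting to regular points of $f$ as in the statement sidesteps that difficulty for the local conclusion.
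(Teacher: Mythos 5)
Your proposal follows essentially the same route as the paper's proof: both derive the antisymmetrized derivative of the Ricci tensor from the quasi--Einstein equation via the Ricci identity (the paper's identity~\eqref{eq3}), combine it with the Weyl decomposition~\eqref{Weyl} and the vanishing of the Cotton tensor to conclude that $\nabla f$ is an eigenvector of $\Ric$ and that the tangential Ricci is proportional to the induced metric --- the factor $\mu(n-2)+1$ being exactly why $\mu\neq\tfrac{1}{2-n}$ is needed --- and then use umbilicity, constancy of $\RRR$ and $|\nabla f|^2$ on level sets, and the Gauss equation to get constant-curvature fibers, with case (i) handled by the same conformal change to an Einstein metric. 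The only difference is presentational: you phrase the Cotton--Weyl identity invariantly, while the paper carries out the equivalent computation of the components $\CCC_{0j0}$ and $\CCC_{ij0}$ in coordinates adapted to the level sets of $f$.
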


\begin{rem} 
This result was already known in the case where $\mu=0$, i.e. for
gradient Ricci solitons (see~\cite{caochen} and ~\cite{mancat1}).
Nevertheless, the strategy of the proof is completely new and can be
used as the main step to classify locally conformally flat shrinking and
steady gradient Ricci solitons.  
\end{rem}

\begin{rem}
Very recently, similar results have been obtained by C.~He, P.~Petersen and W.~Wylie~\cite{HePetWylie} in the case when $0<\mu<1$, assuming a slightly weaker condition than locally conformally flatness.
\end{rem}

\section{Proof of Theorem~\ref{main}}
As observed in the introduction,  if $\mu=\frac{1}{2-n}$, $(M^n,g)$ is globally conformally equivalent to a space form.

\medskip

{\em  From now on, we will consider the case $\mu\neq\tfrac{1}{2-n}$. }

\begin{lemma} Let $(M^{n},g)$ be a quasi--Einstein manifold. Then the following identities hold
\begin{eqnarray}
\label{eq1}&\RRR + \Delta f - \mu |\nabla f|^{2} = n\lambda& \\
\label{eq2}&\nabla_{b} \RRR = 2 \RRR_{ab} \nabla^{a} f +2 \mu R
\nabla_{b} f -2 \mu^{2} |\nabla f|^{2} \nabla_{b} f - 2 n\mu\lambda
\nabla_{b} f + \mu \nabla_{b} |\nabla f|^{2}& \\
\label{eq3}&\nabla_{c} \RRR_{ab} - \nabla_{b} \RRR_{ac} = -
\RRR_{cabd} \nabla^{d} f +\mu\big( \RRR_{ab} \nabla_{c} f - \RRR_{ac}
\nabla_{b} f\big) - \lambda\mu\big( g_{ab} \nabla_{c} f - g_{ac}
\nabla_{b} f \big)
\end{eqnarray} 
\end{lemma}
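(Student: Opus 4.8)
The three identities all come from differentiating the quasi--Einstein equation $\RRR_{ab}+\nabla_a\nabla_b f-\mu\nabla_a f\nabla_b f=\lambda g_{ab}$ and rearranging with the usual commutation rules, so the plan is largely computational; the only genuinely delicate point is keeping the sign conventions consistent with those fixed in the introduction. Identity~\eqref{eq1} is immediate: I would simply contract the quasi--Einstein equation with $g^{ab}$, using $g^{ab}\RRR_{ab}=\RRR$, $g^{ab}\nabla_a\nabla_b f=\Delta f$, $g^{ab}\nabla_a f\nabla_b f=|\nabla f|^2$ and $g^{ab}g_{ab}=n$.

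Next I would establish~\eqref{eq3}, from which~\eqref{eq2} can then be deduced. The idea is to take $\nabla_c$ of the quasi--Einstein equation written as $\RRR_{ab}=\lambda g_{ab}-\nabla_a\nabla_b f+\mu\nabla_a f\nabla_b f$ and to antisymmetrize in the pair $(b,c)$. The terms $\nabla_a f\,(\nabla_c\nabla_b f-\nabla_b\nabla_c f)$ cancel by symmetry of the Hessian, leaving a third--derivative contribution $-(\nabla_c\nabla_a\nabla_b f-\nabla_b\nabla_a\nabla_c f)$ together with $\mu(\nabla_c\nabla_a f\,\nabla_b f-\nabla_b\nabla_a f\,\nabla_c f)$. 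For the third--derivative term I would again use the symmetry of the Hessian and then the Ricci commutation identity, which with the curvature convention of the introduction reads $[\nabla_c,\nabla_b]\nabla_a f=\RRR_{cabd}\nabla^d f$, obtaining exactly $-\RRR_{cabd}\nabla^d f$. For the remaining $\mu$--term I would reinsert the quasi--Einstein equation in the form $\nabla_c\nabla_a f=\lambda g_{ac}-\RRR_{ac}+\mu\nabla_c f\,\nabla_a f$; the purely quadratic pieces cancel upon antisymmetrization, and what survives is precisely $\mu(\RRR_{ab}\nabla_c f-\RRR_{ac}\nabla_b f)-\lambda\mu(g_{ab}\nabla_c f-g_{ac}\nabla_b f)$, which is~\eqref{eq3}.

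Finally, for~\eqref{eq2} I would contract~\eqref{eq3} with $g^{ab}$. The left--hand side becomes $\nabla_c\RRR-\nabla^a\RRR_{ac}=\tfrac12\nabla_c\RRR$ by the contracted second Bianchi identity $\nabla^a\RRR_{ac}=\tfrac12\nabla_c\RRR$, while the contraction $g^{ab}\RRR_{cabd}=-\RRR_{cd}$ turns the curvature term into $\RRR_{cd}\nabla^d f$. To match the stated right--hand side I would then eliminate $\RRR_{cd}\nabla^d f$ by means of the quasi--Einstein equation, together with $\nabla^d f\,\nabla_c\nabla_d f=\tfrac12\nabla_c|\nabla f|^2$, which converts the leftover Ricci and $\lambda$ pieces into the gradient expressions $-2\mu^2|\nabla f|^2\nabla_c f-2n\mu\lambda\nabla_c f+\mu\nabla_c|\nabla f|^2$. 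Alternatively, one may obtain~\eqref{eq2} directly by taking the divergence of the quasi--Einstein equation, using the contracted Bianchi identity on the left and the Bochner commutation $\Delta\nabla_b f=\nabla_b\Delta f+\RRR_{bd}\nabla^d f$ on the right, and substituting~\eqref{eq1} for $\Delta f$.

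The main obstacle throughout is essentially bookkeeping: fixing the signs in the commutation formulas consistently with the convention $\mathrm{Riem}(X,Y)Z=\nabla_Y\nabla_X Z-\nabla_X\nabla_Y Z+\nabla_{[X,Y]}Z$ adopted in the paper, and correctly tracking the index contractions such as $g^{ab}\RRR_{cabd}=-\RRR_{cd}$. Once these are pinned down, each of the three identities reduces to a routine sequence of substitutions.
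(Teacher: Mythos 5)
Your overall route is the paper's own: \eqref{eq1} by tracing the quasi--Einstein equation, \eqref{eq3} by differentiating it, antisymmetrizing in $b,c$, commuting third derivatives and resubstituting the equation, and \eqref{eq2} by a divergence argument (the paper takes the divergence of the equation directly and uses Schur's identity $\nabla \RRR = 2\div\Ric$, which is the alternative you mention; your primary route, contracting \eqref{eq3} with $g^{ab}$ and then eliminating $\RRR_{cd}\nabla^{d}f$ via the equation and $\nabla^{d}f\,\nabla_{c}\nabla_{d}f=\tfrac12\nabla_{c}|\nabla f|^{2}$, also closes correctly).

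There is, however, a genuine flaw in the step you yourself singled out as delicate. With the paper's conventions, namely $\mathrm{Riem}(X,Y)Z=\nabla_{Y}\nabla_{X}Z-\nabla_{X}\nabla_{Y}Z+\nabla_{[X,Y]}Z$, $\RRR^{d}_{abc}\partial_{d}=\mathrm{Riem}(\partial_{a},\partial_{b})\partial_{c}$ and $\RRR_{abcd}=g_{de}\RRR^{e}_{abc}$, a direct computation (for instance via the Christoffel expression of $\RRR^{d}_{abc}$) gives the Ricci identity
\[
\nabla_{c}\nabla_{b}\nabla_{a}f-\nabla_{b}\nabla_{c}\nabla_{a}f=\RRR_{cbad}\nabla^{d}f ,
\]
with the free indices in the order $c,b,a$, and \emph{not} $\RRR_{cabd}\nabla^{d}f$ as you assert. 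The two expressions differ by a first--Bianchi term, $\RRR_{cbad}-\RRR_{cabd}=-\RRR_{bacd}$, and $\RRR_{bacd}\nabla^{d}f$ does not vanish in general: on a space form, where $\RRR_{abcd}=K(g_{ac}g_{bd}-g_{ad}g_{bc})$ in these conventions, one finds $\RRR_{cbad}\nabla^{d}f=K(g_{ac}\nabla_{b}f-g_{ab}\nabla_{c}f)$, whereas $\RRR_{cabd}\nabla^{d}f=K(g_{bc}\nabla_{a}f-g_{ab}\nabla_{c}f)$. Consequently a correct derivation yields \eqref{eq3} with curvature term $-\RRR_{cbad}\nabla^{d}f$; the ordering $-\RRR_{cabd}\nabla^{d}f$ that your argument produces is an index transposition of it, and in fact the printed statement carries the same transposition (it is a typo: when the paper applies \eqref{eq3} in the proof of \eqref{id3.2}, the curvature term appears as $-g^{00}\RRR_{0ji0}\nabla_{0}f$, i.e.\ precisely the $\RRR_{cbad}$ ordering with $c=0$, $b=j$, $a=i$). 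So your proof ``succeeds'' only because an incorrect commutation rule reproduces an incorrectly printed formula. To be sound, it should establish the Ricci identity displayed above in the stated convention and carry the $\RRR_{cbad}$ ordering through; note that all contracted consequences, in particular \eqref{eq2} and \eqref{id3}, are unaffected, since the discrepancy $\RRR_{bacd}$ is antisymmetric in $a,b$ and is killed by the contraction with $g^{ab}$.
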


\begin{proof} Equation~\eqref{eq1}: we simply contract equation~\eqref{qem}. 

\medskip

\noindent Equation~\eqref{eq2}: we take the divergence of the equation~\eqref{qem}
\begin{eqnarray*}
\div\Ric_{b} & = & -\nabla^{a}\nabla_{a}\nabla_{b} f + \mu
\nabla^{a}\nabla_{a} f \nabla_{b} f + \mu \nabla^{a}\nabla_{b} f
\nabla_{a} f \\
& = & -\nabla_{b} \Delta f - \RRR_{ab} \nabla^{a} f + \mu \Delta f
\nabla_{b} f +\tfrac{1}{2}\mu \nabla_{b} |\nabla f|^{2}\,,
\end{eqnarray*}
where we interchanged the covariant derivatives. Now, using
equation~\eqref{eq1}, we get
\begin{eqnarray*}
\div\Ric_{b} & = & \nabla_{b} \RRR -\mu\nabla_{b} |\nabla
f|^{2} - \RRR_{ab} \nabla^{a} f -\mu \RRR \nabla_{b} f +\mu^{2}
|\nabla f|^{2} \nabla_{b} f + n\mu\lambda \nabla_{b} f +\tfrac{1}{2}
\mu \nabla_{b} |\nabla f|^{2} \\
& = & \nabla_{b} \RRR - \RRR_{ab} \nabla^{a} f -\mu R \nabla_{b} f
+\mu^{2} |\nabla f|^{2} \nabla_{b} f + n\mu\lambda \nabla_{b} f -
\tfrac{1}{2} \mu \nabla_{b} |\nabla f|^{2}
\end{eqnarray*}
Finally, using Schur's Lemma $\nabla \RRR = 2\div\Ric $, we obtain equation~\eqref{eq2}.

\medskip

\noindent Equation~\eqref{eq3}: taking the covariant derivative of~\eqref{qem} we obtain
\begin{eqnarray*} \nabla_{c} \RRR_{ab} -\nabla_{b} \RRR_{ac} &=& -
  \big(\nabla_{c}\nabla_{b}\nabla_{a} f -
  \nabla_{b}\nabla_{c}\nabla_{a} f\big) \\
&& +\,\mu\big(\nabla_{c}\nabla_{a} f \nabla_{b} f + \nabla_{c}
\nabla_{b} f \nabla_{a} f - \nabla_{b}\nabla_{a} f \nabla_{c} f -
\nabla_{b} \nabla_{c} f \nabla_{a} f\big)\\
& = & - \RRR_{cabd} \nabla^{d}f +\mu \big(\RRR_{ab} \nabla_{c} f -
\RRR_{ac} \nabla_{b} f\big) - \mu \lambda \big(g_{ab}\nabla_{c} f
-g_{ac}\nabla_{b} f\big)\,,   
\end{eqnarray*}
where we interchanged the covariant derivatives and we have used again
equation~\eqref{qem}.
\end{proof}

In any neighborhood, where $|\nabla f|\neq 0$, of a level set
$\Sigma_{\rho}=\{p\in M^n\,|\,\,\, f(p)=\rho\}$ of a regular value
$\rho$ of $f$, we can express the metric $g$ as
\begin{equation}\label{metric}
g = \tfrac{1}{|\nabla f|^{2}}\, df \otimes df + g_{ij}(f,\theta)
\,d\theta^{i} \otimes d\theta^{j}\,,
\end{equation}
where $\theta=(\theta^{1},\dots,\theta^{n-1})$ denotes intrinsic
coordinates for $\Sigma_{\rho}$. In the following computations we will
agree that $\partial_0=\frac{\partial}{\partial f}$, $\partial_j=\frac{\partial}{\partial \theta^j}$, $\nabla_{0} = \nabla_{\partial_0}$, 
$\RRR_{00}= \Ric(\partial_0,\partial_0)$, $\RRR_{0j}=
\Ric(\partial_0,\partial_j)$ and so on.
According to~\eqref{metric}, we compute easily that
\begin{equation}\label{compeasy}
\nabla_j f=0,\qquad \nabla_0 f=|\nabla f|^2, \qquad g^{00}=|\nabla f|^2\,.
\end{equation}
In this coordinate system, we have the following formulae:
\begin{lemma}\label{formulas} If $(M^{n},g)$ is a quasi--Einstein manifold. Then, for
  $j=1,\dots,n-1$, we have
\begin{eqnarray}
\label{id1}\nabla_{j} |\nabla f|^{2} &=& -2 |\nabla f|^{4}\RRR_{0j} \\
\label{id1.2}\nabla_{0} |\nabla f|^{2} &=& -2 |\nabla f|^{4}\RRR_{00}
+2\mu|\nabla f|^{4}+2\lambda|\nabla f|^{2}\\
\label{id2}\nabla_{j} \RRR &=& 2 (1-\mu\big) \, |\nabla
f|^{4}\RRR_{0j} \\
\label{id2.2}\nabla_{0} \RRR &=& 2 (1-\mu\big) \, |\nabla
f|^{4}\RRR_{00}  -2(n-1)\mu\lambda|\nabla f|^{2} + 2\mu \RRR |\nabla
f|^{2} \\
\label{id3}\nabla_{0} \RRR_{j0} - \nabla_{j} \RRR_{00} &=& \mu|\nabla
f|^{2}\,\RRR_{0j}\,.
\end{eqnarray}
Moreover, if $(M^{n},g)$ has $\WWW=0$, for $i,j=1,\dots,n-1$, we have
\begin{eqnarray}
\label{id3.2}\nabla_{0} \RRR_{ij} - \nabla_{j} \RRR_{i0} &=&
\tfrac{\mu(n-2)+1}{n-2}\RRR_{ij}|\nabla f|^{2} +
\tfrac{1}{n-2}\RRR_{00}|\nabla f|^{4} g_{ij} +\\
&&- \tfrac{1}{(n-1)(n-2)}\RRR |\nabla f|^{2} g_{ij} - \lambda \mu
|\nabla f|^{2} g_{ij}\nonumber\,.
\end{eqnarray}
\end{lemma}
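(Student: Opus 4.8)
The plan is to obtain all six identities by specializing the coordinate--free relations \eqref{eq1}--\eqref{eq3} (together with the quasi--Einstein equation \eqref{qem} itself) to the frame adapted to the level sets of $f$ in \eqref{metric}, and then substituting the elementary relations \eqref{compeasy}. The organizing principle is that in these coordinates the gradient of $f$ points purely in the $\partial_0$ direction: its tangential components vanish, $\nabla_j f=0$, while $g^{00}=|\nabla f|^2$ produces the powers of $|\nabla f|^2$ each time an index is contracted against $\nabla f$. Consequently, whenever I read off a \emph{tangential} component (index $j$) of an identity, every term carrying an explicit factor $\nabla_j f$ drops out, and the surviving contributions are precisely the curvature contractions against the normal direction; reading off a \emph{normal} component ($0$) instead keeps all terms.

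I would begin with \eqref{id1} and \eqref{id1.2}. Writing $|\nabla f|^2=\nabla_a f\,\nabla^a f$ and differentiating, $\nabla_c|\nabla f|^2=2\,\nabla^a f\,\nabla_c\nabla_a f$; substituting the Hessian extracted from \eqref{qem}, namely $\nabla_c\nabla_a f=\lambda g_{ca}-\RRR_{ca}+\mu\,\nabla_c f\,\nabla_a f$, and then using \eqref{compeasy} gives the two formulae at once: in the tangential direction $c=j$ the terms $\lambda g_{cj}$ and $\mu\nabla_c f$ vanish after contraction (they carry a factor $\nabla_j f$), leaving only the Ricci term, whereas in the normal direction $c=0$ all three terms persist and reproduce the $\lambda$ and $\mu$ contributions of \eqref{id1.2}. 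With \eqref{id1} and \eqref{id1.2} in hand, \eqref{id2} and \eqref{id2.2} follow by inserting them into \eqref{eq2} and again separating tangential and normal components: the term $2\RRR_{ab}\nabla^a f$ of \eqref{eq2} and the $\mu\,\nabla_b|\nabla f|^2$ term combine into the factor $2(1-\mu)$, and in the normal direction the remaining scalar terms (the $\mu\RRR$, $\mu^2|\nabla f|^2$ and $n\mu\lambda$ contributions) collapse, after cancellation, into the stated right--hand side.

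For \eqref{id3} I would specialize \eqref{eq3} to the index choice producing $\nabla_0\RRR_{j0}-\nabla_j\RRR_{00}$. The decisive simplification is that the Riemann term $-\RRR_{cabd}\nabla^d f$ becomes, after contracting the gradient into the $\partial_0$ slot, a component of the form $\RRR_{0\,\cdot\,0\,0}$, which vanishes by the antisymmetry of the curvature tensor in its first pair of indices; the $\lambda\mu$ term and one of the $\mu$ terms also disappear because they are proportional to $g_{0j}=0$ or to $\nabla_j f=0$, so that only the single term $\mu|\nabla f|^2\RRR_{0j}$ survives.

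The genuinely new step, and the main obstacle, is \eqref{id3.2}, the only identity requiring local conformal flatness. Here I specialize \eqref{eq3} to the purely tangential choice $(c,a,b)=(0,i,j)$, producing $\nabla_0\RRR_{ij}-\nabla_j\RRR_{i0}$ on the left. Now the Riemann term does \emph{not} vanish: contracting $\nabla f$ into $-\RRR_{0ijd}\nabla^d f$ leaves the mixed radial--tangential component $-|\nabla f|^2\,\RRR_{0ij0}$, which for a generic metric cannot be expressed through Ricci and scalar curvature. This is exactly where the hypothesis $\WWW=0$ enters: setting the Weyl tensor \eqref{Weyl} to zero and evaluating it on the indices $(0,i,j,0)$, with $g_{0j}=0$ and $g_{00}=1/|\nabla f|^2$, expresses $\RRR_{0ij0}$ solely in terms of $\RRR_{ij}$, $\RRR_{00}$, $\RRR$ and $g_{ij}$. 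Substituting this back, together with $\nabla_j f=0$ and $g_{0j}=0$ to kill the residual $\mu$- and $\lambda\mu$-terms carrying tangential gradient factors, and collecting the coefficient of $\RRR_{ij}$ (which merges the $\tfrac{1}{n-2}$ coming from the Weyl term with the $\mu$ coming from \eqref{eq3} into $\tfrac{\mu(n-2)+1}{n-2}$) yields \eqref{id3.2}. I expect the only delicate points to be bookkeeping: keeping the powers of $|\nabla f|^2$ produced by $g^{00}$ consistent throughout, and matching the index convention for the normal components so that the coefficients in \eqref{id1}--\eqref{id2.2} align.
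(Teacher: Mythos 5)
Your proposal is correct and follows essentially the same route as the paper's proof: \eqref{id1}--\eqref{id1.2} by differentiating $|\nabla f|^{2}$ and substituting the Hessian extracted from \eqref{qem}, \eqref{id2}--\eqref{id2.2} by feeding these into \eqref{eq2}, \eqref{id3} by specializing \eqref{eq3} and killing the Riemann term through a curvature antisymmetry, and \eqref{id3.2} by specializing \eqref{eq3} and using $\WWW=0$ in \eqref{Weyl} to express the surviving component $\RRR_{0ij0}$ through $\RRR_{ij}$, $\RRR_{00}$, $\RRR$ and $g_{ij}$ --- exactly the paper's steps. The only caveat is the bookkeeping you yourself flag: the paper's stated powers of $|\nabla f|$ rest on its convention \eqref{compeasy}, i.e.\ $\nabla_{0}f=|\nabla f|^{2}$, under which the Riemann term in \eqref{id3.2} enters as $-|\nabla f|^{4}\RRR_{0ij0}$ rather than the $-|\nabla f|^{2}\RRR_{0ij0}$ you wrote, whereas with the standard normalization $df(\partial_{0})=1$ your computation is internally consistent but yields each identity with one factor of $|\nabla f|^{2}$ fewer --- a discrepancy that is immaterial for how the lemma is used afterwards, since only the vanishing of $\RRR_{0j}$ and of the trace-free part of the second fundamental form is needed there.
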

\begin{proof} Equation~\eqref{id1}: we compute
\begin{equation*}
\nabla_{j} |\nabla f|^{2} = 2 \, g^{00}\nabla_{j}\nabla_{0} f
\nabla_{0} f + 2 g^{kl} \nabla_{j} \nabla_{k} f \nabla_{l} f. 
\end{equation*}
Using~\eqref{qem} and~\eqref{compeasy} we thus obtain
\begin{eqnarray*}
\nabla_{j} |\nabla f|^{2} &=& \, 2g^{00}\left(-\RRR_{0j}+\mu df\left(\partial_j\right)df\left(\partial_0\right)+\lambda g_{0j}\right)\nabla_0 f\\
&=&\,-2 |\nabla f|^{4} \RRR_{0j} +2\mu |\nabla f|^{2}\nabla_{j} f
\nabla_{0} f \nabla_{0} f + 2 \lambda |\nabla f|^{2}g_{0j} \nabla_{0} f \\
&=& \, -2 |\nabla f|^{4} \RRR_{0j}\,.
\end{eqnarray*}

\medskip

\noindent Equation~\eqref{id1.2}: we have as before,
\begin{eqnarray*}
\nabla_{0} |\nabla f|^{2} &=& 2 |\nabla f|^{4}\nabla^{2}_{00} f\\
&=& \, 2 |\nabla f|^{4}\left( -\RRR_{00} +\mu df(\partial_0)df(\partial_0) +  \lambda g_{00} \right) \\
&=& \, -2 |\nabla f|^{4} \RRR_{00} +2\mu |\nabla f|^{4} + 2 \lambda
|\nabla f|^{2}.
\end{eqnarray*}

\medskip

\noindent Equation~\eqref{id2}: using equation~\eqref{eq2} and
equation~\eqref{id1} one has
\begin{eqnarray*}
 \nabla_{j} \RRR = 2 |\nabla f|^{4} \RRR_{0j} -2\mu|\nabla f|^{4}
 \RRR_{0j}= 2(1-\mu) |\nabla f|^{2} \RRR_{0j}\,.
\end{eqnarray*}
\medskip

\noindent Equation~\eqref{id2.2}: it follows as before from
equations~\eqref{eq2} and~\eqref{id1.2}.

\medskip

\noindent Equation~\eqref{id3}: using equation~\eqref{eq3}, we have
\begin{eqnarray*}
\nabla_{0} \RRR_{j0} - \nabla_{j} \RRR_{00} = - g^{00}\RRR_{0j00}
\nabla_{0} f +\mu \big(\RRR_{0j}\nabla_{0} f - \RRR_{00}\nabla_{j}
f\big) - \lambda \big(g_{0j} \nabla_{0} f - g_{00} \nabla_{j} f \big)
= \mu |\nabla f|^{2} \RRR_{0j}\,.
\end{eqnarray*}

\medskip

\noindent Equation~\eqref{id3.2}: using again equation~\eqref{eq3}, we have
\begin{eqnarray*}
\nabla_{0} \RRR_{ij} - \nabla_{j} \RRR_{i0} &=& - g^{00}\RRR_{0ji0}
\nabla_{0} f +\mu \big(\RRR_{ij}\nabla_{0} f - \RRR_{i0}\nabla_{j}
f\big) - \lambda\mu \big(g_{ij} \nabla_{0} f - g_{i0} \nabla_{j} f
\big) \\
&=& \, -|\nabla f|^{4}\Big[
\tfrac{1}{n-2}\big(\RRR_{i0}g_{j0}+\RRR_{j0}g_{i0}-\RRR_{00}g_{ij}-\RRR_{ij}g_{00}\big)
-\tfrac{\RRR}{(n-1)(n-2)}\big(g_{j0}g_{i0}-g_{ij}g_{00}\big)\Big]\\
&&+ \mu |\nabla f|^{2}\RRR_{ij} - \lambda\mu|\nabla f|^{2}g_{ij}\\
&=& \tfrac{\mu(n-2)+1}{n-2}\RRR_{ij}|\nabla f|^{2} +
\tfrac{1}{n-2}\RRR_{00}|\nabla f|^{4} g_{ij} -
\tfrac{1}{(n-1)(n-2)}\RRR |\nabla f|^{2} g_{ij} - \lambda \mu |\nabla
f|^{2} g_{ij}\,.
\end{eqnarray*}
where in the second equality we have used the decomposition formula
for the Riemann tensor~\eqref{Weyl} and the fact that the Weyl curvature
part vanishes.
\end{proof}

Now, if we assume that the manifold is locally conformally flat, the
Cotton tensor is identically zero. Locally around every point where
$|\nabla f|\neq 0$, from Lemma~\ref{formulas}, we obtain
\begin{eqnarray*}
\CCC_{0j0} &=& \nabla_{0} \RRR_{j0} - \nabla_{j} \RRR_{00} -
\tfrac{1}{2(n-1)} \big( \nabla_{0} \RRR \, g_{j0} - \nabla_{j} \RRR \,
g_{00} \big)\\
&=& \mu |\nabla f|^{2} \RRR_{0j} +\tfrac{1-\mu}{n-1}|\nabla f|^{2} \,
\RRR_{0j}\\
&=& \tfrac{\mu(n-2)+1}{n-1}\, |\nabla f|^{2} \,\RRR_{0j}\,.
\end{eqnarray*}
Hence if $(M^{n},g)$ is locally conformally flat, we have that $\RRR_{0j}=0$ for every $j=1,\dots,n-1$, hence also
\begin{equation}\label{nullity}
\nabla_{j} \RRR = \nabla_{j} |\nabla f|^{2} = 0\,,
\end{equation}
where we have used again the previous lemma.
Hence, one has
\begin{eqnarray*}
\Gamma_{0j}^{0}&=& \frac{1}{2}g^{l0}(\partial_0g_{jl}+\partial_jg_{0l}-\partial_lg_{0j})\\
&=&\,\frac{1}{2}g^{00}(\partial_0g_{j0}+\partial_jg_{00})=0\,,\\
\Gamma_{00}^{j}&=&\,\frac{1}{2}g^{lj}(\partial_0g_{0l}+\partial_0g_{0l}-\partial_lg_{00})\\
&=&\,\frac{1}{2}g^{ij}(-\partial_ig_{00})=0\,,
\end{eqnarray*}
since $\partial_{j}g_{00}=\partial_{j}(|\nabla f|^{-2})=0$. An easy
computation shows that $\partial_{j} \RRR_{00} = 0$. Indeed, 
$$
\partial_{j} \RRR_{00} = \nabla_{j} \RRR_{00}+2\Gamma_{j0}^{0}
\RRR_{00} = \nabla_{j} \RRR_{00} = \nabla_{0} \RRR_{j0}=\partial_{0}
\RRR_{0j} - \Gamma_{00}^{i} \RRR_{ij}-\Gamma_{0j}^{0} \RRR_{00} = 0\,,
$$
where we used equations~\eqref{id3} and~\eqref{nullity}.
Now we want to show that the mean curvature of the level set
$\Sigma_{\rho}$ is constant on the level set. We recall that, since
$\nabla f / |\nabla f|$ is the unit normal 
vector to $\Sigma_{\rho}$, the second fundamental form $h$ 
verifies
\begin{equation}\label{IIform}
h_{ij} = -\frac{\nabla_{ij}^{2} f}{|\nabla f|} = \frac{\RRR_{ij} -
  \lambda g_{ij}}{|\nabla f|}\,,
\end{equation}
for $i,j=1,\dots,n-1$.
Thus, the mean curvature $\HHH$ of $\Sigma_{\rho}$
satisfies 
\begin{eqnarray}\label{mean}
\HHH = g^{ij}\,h_{ij} = \frac{\RRR -\RRR_{00}|\nabla f|^{2}- (n-1)\lambda
}{|\nabla f|} \,,
\end{eqnarray}
which clearly implies that the mean curvature is constant on $\Sigma_{\rho}$,
since all the quantities on the right hand side do. Now we want to
compute the components $\CCC_{ij0}$ of the Cotton tensor. Using
equations~\eqref{id2.2},~\eqref{id3.2} and~\eqref{nullity}, we have
\begin{eqnarray*}
\CCC_{ij0} &=& \nabla_{0} \RRR_{ij} - \nabla_{j} \RRR_{i0} -
\tfrac{1}{2(n-1)} \big( \nabla_{0} \RRR \, g_{ij} - \nabla_{j} \RRR \,
g_{i0} \big)\\
&=& \nabla_{0} \RRR_{ij} - \nabla_{j} \RRR_{i0} - \tfrac{1}{2(n-1)}
\nabla_{0} \RRR \, g_{ij}\\
&=& \tfrac{\mu(n-2)+1}{n-2}\RRR_{ij}|\nabla f|^{2} +
\tfrac{1}{n-2}\RRR_{00}|\nabla f|^{4} g_{ij} -
\tfrac{1}{(n-1)(n-2)}\RRR |\nabla f|^{2} g_{ij} - \lambda \mu |\nabla
f|^{2} g_{ij}+\\
&&\, - \tfrac{1}{2(n-1)}\Big[2 (1-\mu\big) \, |\nabla f|^{4}\RRR_{00}
-2(n-1)\mu\lambda|\nabla f|^{2} + 2\mu \RRR |\nabla
f|^{2}\Big]g_{ij}\\
&=& \tfrac{\mu(n-2)+1}{n-2}\Big(\RRR_{ij}
+\tfrac{1}{n-1}\RRR_{00}|\nabla f|^{2}g_{ij} - \tfrac{1}{n-1}\RRR
\,g_{ij}\Big)|\nabla f|^{2}\,.
\end{eqnarray*}
Finally, using the expression~\eqref{IIform} and~\eqref{mean}, we obtain
$$
\CCC_{ij0} = \tfrac{\mu(n-2)+1}{n-2} \big( h_{ij} - \tfrac{1}{n-1} \HHH
\,g_{ij} \big) |\nabla f|^{3}\,.
$$
Again, since all the components of the Cotton tensor vanish and we are
assuming that $\mu\neq\tfrac{1}{2-n}$, we obtain that 
\begin{eqnarray}\label{umbilic}
h_{ij}=\tfrac{1}{n-1} \HHH \, g_{ij}\,.
\end{eqnarray}
For any given $p\in \Sigma_{\rho}$, we suppose now to take orthonormal
coordinates centered at $p$, still denoted by 
$\theta^{1},\dots,\theta^{n-1}$. 
From the Gauss equation (see also~\cite[Lemma 3.2]{caochen} for a
similar argument), one can see that the sectional curvatures of
$(\Sigma_{\rho},g_{ij})$ at $p$ with the induced metric $g_{ij}$, are
given by
\begin{eqnarray*}
\RRR_{ijij}^{\Sigma} &=& \RRR_{ijij} + h_{ii}h_{jj} - h_{ij}^{2}\\
&=& \, \tfrac{1}{n-2}\big(\RRR_{ii}+\RRR_{jj}\big) -
\tfrac{1}{(n-1)(n-2)}\RRR + \tfrac{1}{(n-1)^{2}}\HHH^{2}\\
&=& \, \tfrac{2}{n-2}\RRR_{ii} - \tfrac{1}{(n-1)(n-2)}\RRR +
\tfrac{1}{(n-1)^{2}}\HHH^{2}\\
&=& \tfrac{2}{(n-1)(n-2)}\HHH|\nabla f| + \tfrac{2}{n-2}\lambda -
\tfrac{1}{(n-1)(n-2)}\RRR + \tfrac{1}{(n-1)^{2}}\HHH^{2}\,,
\end{eqnarray*}
for $i,j=1,\dots, n-1$, where in the second equality we made use of the decomposition formula for the Riemann tensor~\eqref{Weyl}, the locally conformally flatness of $g$ and of~\eqref{umbilic}. Since
all the terms on the right hand side are constant on $\Sigma_{\rho}$,
we obtain that the sectional curvatures of $(\Sigma_{\rho},g_{ij})$ 
are constant, which implies that $(M^{n},g)$ is locally a warped product
metric with fibers of constant curvature.
\qed

\begin{rem} Consider the manifold $(M^n,\widetilde{g})$ with the conformal metric $\widetilde{g}=e^{-\frac{2}{n-2}f}g$. Since the locally conformally flat property is conformally invariant this is a still locally conformally flat metric, hence its Cotton tensor is zero. Thus, from equations~\eqref{eq1} and ~\eqref{nullity} (this latter saying that the modulus of the gradient of $f$ is constant along any regular level set of $f$), it follows that its Ricci tensor has only two eigenvalues of multiplicities one and $(n-1)$, which are constant along the level sets of $f$. Indeed,
\begin{eqnarray*}
\Ric_{\widetilde{g}}&=&\Ric_{g} +\nabla^{2} f +\tfrac{1}{n-2} df \otimes
df +\tfrac{1}{n-2} \big(\Delta f - |\nabla f|^{2} \big) g\\
&=& \, \Bigl(\tfrac{1}{n-2}+\mu\Bigr) df \otimes
df + \tfrac{1}{n-2}\big(\Delta f - |\nabla f|^{2}+(n-2)\lambda\big)
e^{\frac{2}{n-2}f} \,\widetilde{g}\,.
\end{eqnarray*}
Then, arguing as in~\cite{mancat1} by means of splitting results for manifolds admitting a
  Codazzi tensor with only two distinct eigenvalues, we can conclude
  that $(M^n,\widetilde{g})$ is locally a warped product with
  $(n-1)$--dimensional fibers of constant curvature which are the
  level sets of $f$.\\
  By the structure of the conformal deformation this conclusion also holds for
  the original Riemannian manifold $(M^n,g)$.
\end{rem}

 It is well known that, if $(M^{n},g)$ is a compact locally
  conformally flat gradient shrinking Ricci soliton, then it has constant
  curvature (see~\cite{mantemin2}). As pointed out to us by the anonymous referee such a conclusion cannot be extended to quasi--Einstein metrics. Indeed, C. B\"{o}hm in \cite{bohm} has found Einstein metrics on $\SS^{k+1}\times \SS^{l}$ for $k,l\geq 2$ and $k+l\leq 8$ and these induce a quasi--Einstein metric on $\SS^{k+1}$ with $\mu=\frac{1}{l}$ and with the metric on $\SS^{k+1}$ being conformally flat (see also \cite{HePetWylie}).

In the complete, noncompact, case one would like to use Theorem~\ref{main} to have a
  classification of LCF quasi--Einstein manifolds (see~\cite{caochen} and~\cite{zhang} for steady and shrinking gradient Ricci
  solitons, respectively). Possibly one has to assume some curvature conditions as the nonnegativity of the curvature operator or of the Ricci tensor.

\bigskip

\begin{ackn} 
We wish to thank Andrea Landi and Alessandro Onelli for several
interesting comments on earlier versions of the paper.\\
The first three authors are partially supported by the Italian project FIRB--IDEAS ``Analysis and Beyond''.
\end{ackn}

\bibliographystyle{amsplain}
\bibliography{QuasiEinstein}

\end{document}